\newtheorem{theorem}{Theorem}
\numberwithin{theorem}{section}
\newaliascnt{conj}{theorem}
\newaliascnt{corollary}{theorem}
\newaliascnt{lemma}{theorem}
\newaliascnt{fact}{theorem}
\newaliascnt{claim}{theorem}
\newaliascnt{prop}{theorem}
\newaliascnt{definition}{theorem}
\newaliascnt{remark}{theorem}
\newtheorem{thmx}{Theorem}
\numberwithin{thmx}{section}
\newtheorem{conjx}{Conjecture}
\newtheorem{corollary}[corollary]{Corollary}
\newtheorem{lemma}[lemma]{Lemma}
\newtheorem{prop}[prop]{Proposition}
\newtheorem{remark}[remark]{Remark}
\newtheoremstyle{claimintheorem}
{\topsep}   % above space
{\topsep}   % below space
{}          % body font
{1.5em}          % indent
{\bf}          % head font
{.}          % head punctuation
{3pt plus 1pt minus 1pt} % HEADSPACE
{}          % CUSTOM-HEAD-SPEC
\theoremstyle{claimintheorem}
\newcounter{claimintheorem}
\newtheoremstyle{claiminproof}
{\topsep}   % above space
{\topsep}   % below space
{}          % body font
{1.5em}          % indent
{\bf}          % head font
{.}          % head punctuation
{3pt plus 1pt minus 1pt} % HEADSPACE
{}          % CUSTOM-HEAD-SPEC
\theoremstyle{claiminproof}
\newcounter{claiminproof}
\newenvironment{proofof}[1]{{\noindent\textit{Proof of #1.}}}{\hfill $\Box$\par}
\newenvironment{sketchofproof}{{\noindent\textbf{Sketch of proof.}}}{\hfill $\Box$\par}
\crefname{claim}{Claim}{Claim}
\crefname{corollary}{Corollary}{Corollary}
\crefname{conj}{Conjecture}{Conjecture}
\crefname{definition}{Definition}{Definition}
\crefname{equation}{Equation}{Eq.}
\crefname{example}{Example}{Example}
\crefname{lemma}{Lemma}{Lemma}
\crefname{prop}{Proposition}{Proposition}
\crefname{remark}{Remark}{Remark}
\crefname{claimintheorem}{Claim}{Claim}
\crefname{claiminproof}{Claim}{Claim}
\crefname{theorem}{Theorem}{Theorem}
\theoremstyle{definition}
\newaliascnt{example}{theorem}
\def\sek~{\S{}}
\numberwithin{equation}{section}
\newcommand{\Rmnum}[1]{\expandafter\@slowromancap\romannumeral #1@}
\newcommand{\dvol}{\operatorname{dVol}}
\newcommand{\Ric}{\operatorname{Ric}}
\newcommand{\trace}{\operatorname{tr}}
\newcommand{\cF}{{\mathcal F}}
\newcommand{\cH}{{\mathcal H}}
\newcommand{\cJ}{{\mathcal J}}
\newcommand{\cR}{{\mathcal R}}
\begin{document}
\title[Extensions of the Bonnet-Myers Theorem]{Extensions of the Bonnet-Myers Theorem}

\author{Ronggang Li}
\address[Li]{School of Mathematical and statistics, Nanjing University of Information Science and Technology. Nanjing 210044, China}
\email{lrg@pku.edu.cn}

\author{Shaoqing Wang}
\address[Wang]{School of Mathematics and Statistics, Key Laboratory of Nonlinear Analysis and Applications (Ministry of Education), and Hubei Key Lab--Math. Sci., Central China Normal University, Wuhan 430079, China}
\email{swang@ccnu.edu.cn}

\subjclass[2000]{Primary: 53C21, 53C22, 53C23}
\keywords{Riemannian manifold, Ricci curvature, mean curvature, diameter, Bonnet-Myers theorem}

\begin{abstract}
In this paper, we present extensions of the classical Bonnet-Myers theorem for Riemannian manifolds with nonnegative Ricci curvature. Our results provide criteria for compactness and a method for estimating the diameter of such manifolds under general curvature conditions. As applications, we establish compactness theorems for manifolds whose Ricci curvature decays at polynomial or exponential rates.
\end{abstract}
\maketitle

%%%%%%%%%%%%%%%%%%%%%%%%%%%%%%%%%%%%%%%%%%%%%%%%%%%%%%%%%%%%%%%%%%%%%%%%%%%%%%%%%%%%%%%%%% Introduction %%%%%%%%%%%%%%%%%%%%%%%%%%%%%%%%%%%%%%%%%%%%%%%%%%%%%%%%%%%%%%%%%%%%%%%%%%%%%%%%%%%%%%
%%%%%%%%%%%%%%%%%%%%%%%%%%%%%%%%%%%%%%%%%%%%%%%%%%%%%%%%%%%%%%%%%%%%%%%%%%%%%%%%%%%%%%%%%%%%%%%%%%%%%%%%%%%%%%%%%%%%%%%%%%%%%%%%%%%%%%%%%%%%%%%%%%%%%%%%%%%%%%%%%%%%%%%%%%%%%%%%%%%%%%%%%%%%%%%%%%%%%%%%%%%%

\section{Introduction}\label{section:introduction}

Geometric inequalities involving the radial Ricci curvature along a geodesic are key to determining the compactness of a manifold and to estimating its diameter if the manifold is closed. This can be confirmed by the classical Bonnet-Myers theorem and its generalizations.

In \cite{calabi1967}, Calabi gives a series of integral estimates of the square root of the average radial Ricci curvature $q(r) = \frac{1}{n-1} \Ric(\dot\gamma, \dot\gamma)$ along a minimal geodesic $\gamma$ on a Riemannian manifold with non-negative Ricci curvature. In particular, when $\gamma: [0, \infty) \to M$ is a ray,
\begin{equation*}\label{ineq:cal}
\int_{a}^{b} \sqrt{q(r)}  dr \leq \sqrt{\frac{1}{4} \ln^2\left(\frac{b}{a}\right) + \ln\left(\frac{b}{a}\right)}, \quad \forall b > a > 0.
\end{equation*}
Therefore, if $(M, g)$ is complete and non-compact (i.e., open), the decay rate of its Ricci curvature is at most $2$. Equivalently, $(M, g)$ is compact provided $q(r) \geq c r^{-\lambda}$ for $r \geq r_0$, where $c > 0$, $\lambda < 2$, and $r$ is the distance to a fixed point $p \in M$.

One can verify that if $\lambda = 2$ and $c = \frac{1}{4} + \mu > \frac{1}{4}$, then $M$ is also compact; furthermore, Cheeger, Gromov, and Taylor proved that its diameter is at most $\frac{e^\mu}{r_0}$ \cite{CGT1982}.

If $\lambda> 2$, Wan showed that $M$ is compact provided $c \geq \frac{(p-1)^p}{(p-2)^{p-2}} r_0^{p-2}$ \cite{wan2019}.

In this paper, general lower bounds on Ricci curvature are considered, and we establish technical refinements of the Bonnet-Myers theorem as follows. 

\begin{theorem}\label{thm:technical}
Let $(M, g)$ be a complete $n$-dimensional Riemannian manifold. Suppose $\gamma: [0, \infty) \to M$ is a ray along which the average radial Ricci curvature satisfies $q(r) = \frac{1}{n-1} \Ric(\dot\gamma, \dot\gamma)(r) \geq 0$. Let $\psi$ be a monotonic function on $[0, \infty)$. Define
\begin{align}\label{def:functional}
\cF(q, \psi, a, b) =
\begin{dcases}
\int_{a}^{b} \frac{\psi q}{\psi^2 + q}  dr & \text{if $\psi$ is non-decreasing on $[a, b]$},  \\
\int_{a}^{b} \frac{\psi q}{\psi^2 + q}  dr - \frac{1}{4} \log\frac{\psi(a)}{\psi(b)} & \text{if $\psi$ is non-increasing on $[a, b]$}, \\
\end{dcases}
\end{align}
where $[a, b] \subset [0, \infty)$ and $b$ may be taken as $\infty$. Then,
\begin{align}\label{ineq:criterion}
\cF(q, \psi, a, b) \leq 1,
\end{align}
\end{theorem}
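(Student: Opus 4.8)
The plan is to run a Riccati (mean-curvature) comparison along the ray and convert it, through a modified Prüfer substitution adapted to $\psi$, into an elementary one-variable integral inequality. Write $r=\dist(\gamma(0),\cdot)$ and let $v=\frac{1}{n-1}\Delta r$ be the normalized mean curvature of the geodesic spheres. Since $\gamma$ is a ray it has no cut point, so $r$ and $v$ are smooth along $\gamma((0,\infty))$, with $v(r)\to+\infty$ as $r\to0^{+}$ and the Riccati inequality $v'\le -v^{2}-q$ holding by Bochner together with $|\Hess r|^{2}\ge\frac{(\Delta r)^{2}}{n-1}$. The decisive first observation is that $v\ge 0$ on all of $(0,\infty)$: if $v(r_{1})<0$, then $v'\le -v^{2}$ and comparison with the solution of $y'=-y^{2}$, $y(r_{1})=v(r_{1})$, forces $v\to-\infty$ in finite time, i.e.\ a conjugate point, contradicting that $\gamma$ is a ray. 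This nonnegativity is exactly what will pin the constant in \eqref{ineq:criterion} to $1$.

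Next I would introduce the phase $\theta\in(0,\pi/2]$ by $v=\psi\cot\theta$, which is legitimate because $\psi>0$ and $v\ge 0$ force $\cot\theta\ge 0$, with $\theta\to0^{+}$ as $r\to0^{+}$. Differentiating and substituting the Riccati inequality yields the Prüfer estimate
\[
\theta'\ \ge\ \psi\cos^{2}\theta+\frac{q}{\psi}\sin^{2}\theta+\frac{\psi'}{2\psi}\sin 2\theta .
\]
The payoff of this change of variables is the pointwise algebraic inequality
\[
\frac{\psi q}{\psi^{2}+q}\ \le\ \frac{\psi\cos^{2}\theta+\frac{q}{\psi}\sin^{2}\theta}{1+\sin 2\theta},
\]
which, after clearing denominators, reduces to $\bigl(\cos\theta-\tfrac{q}{\psi^{2}}\sin\theta\bigr)^{2}\ge 0$, a perfect square, with equality precisely when $q=\psi^{2}\cot\theta$. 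Combining the two displays gives the clean bound
\[
\frac{\psi q}{\psi^{2}+q}\ \le\ \frac{\theta'}{1+\sin 2\theta}-\frac{\psi'}{2\psi}\cdot\frac{\sin 2\theta}{1+\sin 2\theta}.
\]

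Finally I would integrate over $[a,b]$. The first term integrates by substitution to $\int_{\theta(a)}^{\theta(b)}\frac{d\theta}{1+\sin 2\theta}$, and since $\theta$ takes values in $(0,\pi/2]$ this is at most $\int_{0}^{\pi/2}\frac{d\theta}{1+\sin 2\theta}=1$, the elementary evaluation producing the constant on the right of \eqref{ineq:criterion}. For the second term, if $\psi$ is nondecreasing on $[a,b]$ then $\psi'\ge 0$ and, as $\sin 2\theta\ge 0$, the term is nonpositive, so $\int_{a}^{b}\frac{\psi q}{\psi^{2}+q}\,dr\le 1$. If $\psi$ is nonincreasing, then $-\frac{\psi'}{2\psi}\ge 0$ and the sharp elementary estimate $\frac{\sin 2\theta}{1+\sin 2\theta}\le\frac12$ gives $-\int_{a}^{b}\frac{\psi'}{2\psi}\frac{\sin 2\theta}{1+\sin 2\theta}\,dr\le-\frac14\int_{a}^{b}\frac{\psi'}{\psi}\,dr=\frac14\log\frac{\psi(a)}{\psi(b)}$; transposing this term reproduces exactly the functional $\cF(q,\psi,a,b)$ of \eqref{def:functional} and the bound $\cF\le 1$.

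The steps requiring the most care are: (i) the nonnegativity $v\ge 0$, where the ray hypothesis (global absence of conjugate points) enters and without which $\theta$ could exceed $\pi/2$ and the constant would degrade; (ii) the regularity of $r$ and $v$ along the ray and the boundary behaviour at $r=0$ and at $b=\infty$, harmless here since a ray has no cut point; and (iii) verifying that the factor $\tfrac14$, rather than $\tfrac12$, is forced by $\frac{\sin 2\theta}{1+\sin 2\theta}\le\frac12$. I expect (i) to be the conceptual crux, as it is what converts the a priori phase budget $\pi$ into the effective budget $\pi/2$ responsible for the sharp constant $1$.
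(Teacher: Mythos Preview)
Your argument is correct and complete. It is, however, a genuinely different packaging from the paper's proof. The paper does not use a Pr\"ufer angle: instead it invokes an ODE squeeze lemma (their Lemma~2.1) to manufacture an auxiliary function $\phi$ solving $\phi'+\psi'\phi^{2}=\frac{q}{\psi^{2}+q}$ exactly and sandwiched by $\frac{1}{\psi+m}\le\phi\le\frac{1}{\psi}$; then it differentiates $\psi\phi$, uses $0\le\psi\phi\le1$ and the elementary bound $x(1-x)\le\tfrac14$, and integrates. Your route bypasses the existence lemma entirely by working directly with the lower barrier: under $m=\psi\cot\theta$ one has $F(\theta)=\int_{0}^{\theta}\frac{ds}{1+\sin 2s}=\frac{\tan\theta}{1+\tan\theta}=\frac{\psi}{\psi+m}$, and your inequality $\frac{\sin 2\theta}{1+\sin 2\theta}\le\tfrac12$ is exactly $x(1-x)\le\tfrac14$ for $x=\frac{\psi}{\psi+m}$. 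So in content your proof is the paper's argument with the auxiliary $\phi$ stripped out and the subsolution $\frac{1}{\psi+m}$ used directly; the trigonometric substitution is a pleasant way to see the constant $1$ as the phase budget $\int_{0}^{\pi/2}\frac{d\theta}{1+\sin 2\theta}$. What you gain is economy (no ODE existence step); what the paper's framework gains is a uniform template it reuses verbatim on $[\zeta,\rho]$ for the segment case in Theorem~1.4, where $m\le 0$ and the barriers flip.
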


\begin{remark}
 \Cref{thm:technical} can also deduce that
\begin{align}\label{ineq:mean-curvature}
m(a) \geq \psi(a) \frac{\cF(q, \psi, a, b)}{1 - \cF(q, \psi, a, b)}.
\end{align}
where the meaning of the function $m$ can be seen in \Cref{eq:m=v'/v}.
\end{remark}

\begin{corollary}\label{cor:compact-crit}
If \Cref{ineq:criterion} does not hold for each $\gamma(r) = \exp_p(r \nu)$, $r \in [0, \infty)$, then $M$ is compact.
\end{corollary}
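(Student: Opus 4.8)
The plan is to argue by contraposition, and the logical core is short: \Cref{thm:technical} already guarantees that \Cref{ineq:criterion} holds along every ray, so if that inequality fails along \emph{every} geodesic $\gamma(r)=\exp_p(r\nu)$ issuing from $p$, then $M$ can admit no ray from $p$. Since a complete noncompact manifold always possesses a ray from any point, this forces $M$ to be compact. Concretely, I would assume $M$ is noncompact and construct a ray $\exp_p(r\nu)$, obtaining a contradiction with the hypothesis.

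First I would invoke the classical existence-of-rays construction. As $M$ is complete and noncompact, I choose points $q_i$ with $\dist(p,q_i)\to\infty$, join each to $p$ by a unit-speed minimizing geodesic $\sigma_i$ with initial direction $\nu_i=\dot\sigma_i(0)$ on the unit sphere of $T_pM$, and extract a subsequence with $\nu_i\to\nu$ by compactness of that sphere. I then set $\gamma(r)=\exp_p(r\nu)$.

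The one genuinely technical point, and the step I would handle with most care, is verifying that this limit geodesic $\gamma$ is minimizing on all of $[0,\infty)$, i.e. is an honest ray. For fixed $L$ and all large $i$ the geodesics $\sigma_i$ extend past $r=L$ and are minimizing there; passing to the limit via continuity of $\exp_p$ and of $\dist$ shows that the length of $\gamma|_{[0,L]}$ equals $\dist(p,\gamma(L))$, so $\gamma|_{[0,L]}$ is minimizing. As $L$ is arbitrary, $\gamma$ is a ray emanating from $p$.

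Finally, $\gamma$ is a ray of the required form $\exp_p(r\nu)$ along which $q(r)\ge 0$ (part of the standing nonnegative-Ricci hypothesis), so \Cref{thm:technical} applies directly and yields $\cF(q,\psi,a,b)\le 1$. Thus \Cref{ineq:criterion} \emph{does} hold for this particular $\gamma$, contradicting the assumption that it fails for every such geodesic. Hence $M$ is compact.
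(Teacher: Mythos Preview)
Your proposal is correct and matches the paper's intended reasoning: the paper does not spell out a proof of this corollary, treating it as an immediate contrapositive consequence of \Cref{thm:technical} together with the standard fact that a complete noncompact manifold admits a ray from every point. Your write-up simply fills in those routine details.
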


For the case where $\gamma$ is a geodesic segment on $M$, we obtain the following theorem:

\begin{theorem}\label{thm:technical2}
Let $(M, g)$ be a complete $n$-dimensional Riemannian manifold. Suppose $\gamma: [0, \rho] \to M$ is a geodesic such that $\gamma(\rho)$ is conjugate to $\gamma(0)$ along $\gamma$, and the average radial Ricci curvature $q(r)$ along $\gamma$ is non-negative. Let $\psi$ be a positive and monotonic function on $[0, \infty)$. Then
\begin{align}\label{ineq:segment}
\int_{0}^{\rho} \frac{\psi q}{\psi^2 + q}  dr < 2 + \frac{1}{4} \left| \log\frac{\psi(\rho)}{\psi(0)} \right|.
\end{align}
\end{theorem}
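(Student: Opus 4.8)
The plan is to run a Riccati comparison along $\gamma$ and to exploit the conjugate point by splitting the interval at the single sign change of the mean curvature; each of the two resulting pieces will reproduce the ray estimate of \Cref{thm:technical}, which is why the constant is $2=1+1$. Write $m$ for the normalized mean curvature $\tfrac{1}{n-1}\operatorname{tr}(A'A^{-1})$ of the geodesic spheres about $\gamma(0)$, where $A$ is the Jacobi tensor with $A(0)=0$, $A'(0)=\mathrm{Id}$. I may assume $\rho$ is the first conjugate value, so that $m$ is defined and smooth on $(0,\rho)$; the trace of the matrix Riccati equation together with Cauchy--Schwarz gives $m'\le -m^2-q$ there, with boundary behaviour $m(r)\to+\infty$ as $r\to 0^+$ (since $m\sim 1/r$) and $m(r)\to-\infty$ as $r\to\rho^-$ (the determinant of $A$ vanishes at the conjugate point). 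Hence $m$ is continuous and decreases through $0$, so there is a unique $r_0\in(0,\rho)$ with $m(r_0)=0$, and this $r_0$ is the splitting point.

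The engine is the pointwise inequality obtained from the potential $F=\frac{m}{\psi+m}$ on $[0,r_0]$ (where $m>0$, so $\psi+m>0$). Using $-m'\ge m^2+q$ and $\psi>0$,
\begin{equation*}
-F' = \frac{m\psi'+(-m')\psi}{(\psi+m)^2}\ \ge\ \frac{\psi(m^2+q)}{(\psi+m)^2} + \frac{m\psi'}{(\psi+m)^2},
\end{equation*}
and the first term dominates the integrand by the algebraic identity I would verify,
\begin{equation*}
\frac{\psi(m^2+q)}{(\psi+m)^2} - \frac{\psi q}{\psi^2+q} = \frac{\psi(\psi m-q)^2}{(\psi+m)^2(\psi^2+q)}\ \ge\ 0 .
\end{equation*}
Thus $\frac{\psi q}{\psi^2+q}\le -F' - \frac{m\psi'}{(\psi+m)^2}$. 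Integrating over $[0,r_0]$ with $F(0^+)=1$ and $F(r_0)=0$, and using $\frac{m}{(\psi+m)^2}\le\frac{1}{4\psi}$ (AM--GM) to control the $\psi'$-term, the monotonicity of $\psi$ turns $\int_0^{r_0}|\psi'|/\psi$ into a logarithm and yields $\int_0^{r_0}\frac{\psi q}{\psi^2+q}\,dr \le 1 + \tfrac14\bigl|\log\tfrac{\psi(r_0)}{\psi(0)}\bigr|$. This is exactly the mechanism behind \Cref{thm:technical} and the bound $\cF\le F(a)-F(b)$ recorded in the remark following it.

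On $[r_0,\rho]$ the same computation applies to $\mu=-m>0$, which satisfies $\mu'\ge\mu^2+q$ and runs from $0$ at $r_0$ to $+\infty$ at $\rho$; with $G=\frac{\mu}{\psi+\mu}$ and the identity in the form $(\psi\mu-q)^2\ge 0$ I get $\int_{r_0}^{\rho}\frac{\psi q}{\psi^2+q}\,dr \le 1 + \tfrac14\bigl|\log\tfrac{\psi(\rho)}{\psi(r_0)}\bigr|$. Adding the halves and using that $\psi$ is monotonic, so $\psi(r_0)$ lies between $\psi(0)$ and $\psi(\rho)$ and the two logarithms telescope, gives
\begin{equation*}
\int_0^{\rho}\frac{\psi q}{\psi^2+q}\,dr \ \le\ 2 + \tfrac14\Bigl|\log\tfrac{\psi(\rho)}{\psi(0)}\Bigr| .
\end{equation*}
For strictness, equality would force $(\psi m-q)^2\equiv 0$ and equality in the Riccati step, i.e.\ $q=\psi m$ and $m'=-m^2-\psi m$ on $(0,r_0)$; but near $r_0$ this ODE is asymptotically linear, so $m$ can reach $0$ only exponentially and never at a finite $r_0$, a contradiction.

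I expect the main obstacle to be the bookkeeping at the two ends rather than any one inequality: I must justify the boundary limits $F(0^+)=1$ and $G(\rho^-)=1$ from the asymptotics of $m$ at the initial and at the conjugate point, split exactly at the sign change $r_0$ so that neither potential passes through its pole $\psi+m=0$ (this is precisely why $F$ alone cannot be used on all of $(0,\rho)$), and check that the two correction integrals combine under the monotonicity of $\psi$ into exactly $\tfrac14|\log(\psi(\rho)/\psi(0))|$ with nothing left over. A secondary point to treat carefully is the reduction to the first conjugate value, since $m$ is only defined up to the first conjugate point.
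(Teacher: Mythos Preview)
Your argument is correct and shares the paper's architecture: split $[0,\rho]$ at the sign change $r_0$ of $m$ (the paper's $\zeta$), bound each half by $1$ plus a logarithmic correction, and add. The difference is in how each half is handled. The paper invokes an ODE squeeze lemma (\Cref{lem:exist-squeeze}, via \Cref{prop:range-of-psiphi}) to manufacture an auxiliary $\phi$ solving $\phi'+\psi'\phi^2=\pm\tfrac{q}{\psi^2+q}$ with $\tfrac{1}{\psi\pm m}\le\phi\le\tfrac{1}{\psi}$, and then integrates the exact identity $(\psi\phi)'=(\log\psi)'\psi\phi(1-\psi\phi)\pm\tfrac{\psi q}{\psi^2+q}$ using $0\le\psi\phi(1-\psi\phi)\le\tfrac14$. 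You instead work directly with the explicit potential $F=\tfrac{m}{\psi+m}$, which amounts to taking $\phi=\tfrac{1}{\psi+m}$, the lower squeeze bound itself; your pointwise identity $(m^2+q)(\psi^2+q)-q(\psi+m)^2=(\psi m-q)^2$ is exactly the inequality \eqref{ineq:first} underlying the paper's squeeze, and your AM--GM step $\tfrac{m}{(\psi+m)^2}\le\tfrac{1}{4\psi}$ is precisely $\psi\phi(1-\psi\phi)\le\tfrac14$ specialized to this $\phi$. So your route recovers the same two estimates \eqref{ineq:0tozeta}--\eqref{ineq:zetatorho} while dispensing with the ODE lemma entirely, which is a genuine simplification. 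Two small points worth tightening: the zero set of $m$ can be an interval when $q$ vanishes there (take $r_0=\sup\{r:m(r)>0\}$; the integrand is zero on any such plateau), and your strictness sketch is sound but should also note that in the non-increasing (resp.\ non-decreasing) case the AM--GM step is already strict near $r_0$ since $\psi(r_0)>0=m(r_0)$.
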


Based on this result, the following improvement of the Bonnet-Myers theorem can be deduced directly:

\begin{theorem}\label{thm:gbm}
Let $(M, g)$ be a complete $n$-dimensional Riemannian manifold with a base point $p$. Suppose $r$ is the distance to $p$, and $$q(r) = \inf\left\{ \Ric(\nu, \nu) : \nu \in S_x M,~r(x) = r \right\}$$ for all $r  \geq 0$. If there exists a constant $l > 0$ and a positive monotonic function $\psi$ on $[0, \infty)$ such that
\begin{align}\label{criterion:segment}
\int_{0}^{l} \frac{\psi q}{\psi^2 + q}  dr \geq 2 + \frac{1}{4} \left| \log\frac{\psi(l)}{\psi(0)} \right|,
\end{align}
then the diameter of $M$ is at most $l$.
\end{theorem}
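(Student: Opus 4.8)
The plan is to run a Bonnet--Myers type contradiction, using \Cref{thm:technical2} to produce an interior conjugate point on a geodesic that is assumed to be minimizing. Suppose the diameter of $M$ exceeds $l$. Then by completeness there is a unit-speed minimizing geodesic whose length exceeds $l$, and I will work with one issuing from the base point, $\gamma\colon[0,\rho]\to M$ with $\gamma(0)=p$ and $\rho>l$. The reason for anchoring at $p$ is that only then does the curvature hypothesis transfer cleanly: because $\gamma$ is minimizing, $\gamma(r)$ lies on the metric sphere of radius $r$ about $p$, so the definition of $q$ gives $\Ric(\dot\gamma(r),\dot\gamma(r))\ge (n-1)\,q(r)$. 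Writing $\tilde q(r)=\frac1{n-1}\Ric(\dot\gamma,\dot\gamma)(r)$ for the average radial Ricci curvature actually seen along $\gamma$, this reads $\tilde q(r)\ge q(r)\ge 0$ on $[0,l]$.

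The first genuine step is a pointwise monotonicity observation that lets me replace $q$ by $\tilde q$: for fixed $\psi>0$ the map $t\mapsto \psi t/(\psi^2+t)$ is increasing on $[0,\infty)$, since its derivative is $\psi^3/(\psi^2+t)^2>0$. Combining this with $\tilde q\ge q$ and with the hypothesis \eqref{criterion:segment} yields
\[
\int_{0}^{l}\frac{\psi\tilde q}{\psi^2+\tilde q}\,dr\ \ge\ \int_{0}^{l}\frac{\psi q}{\psi^2+q}\,dr\ \ge\ 2+\frac14\Big|\log\frac{\psi(l)}{\psi(0)}\Big| .
\]

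It remains to convert \Cref{thm:technical2} into the form I actually need, namely that the displayed lower bound forces a conjugate point of $\gamma(0)$ in the half-open interval $(0,l]$. Granting this, let $\rho^\ast\le l$ be such a conjugate parameter; since $\rho^\ast\le l<\rho$, the point $\gamma(\rho^\ast)$ is an interior conjugate point of the minimizing geodesic $\gamma|_{[0,\rho]}$, which is impossible, and the contradiction proves $\diam(M)\le l$.

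The hard part is precisely this conversion, because \Cref{thm:technical2} is stated as a strict \emph{upper} bound that holds once $\gamma(\rho)$ is already conjugate to $\gamma(0)$, whereas I need the reverse implication on the fixed interval $[0,l]$. One cannot simply extend $\gamma$ to its first conjugate parameter $\rho_0\ge\rho$ and invoke \Cref{thm:technical2} there: the threshold $2+\frac14|\log(\psi(\rho_0)/\psi(0))|$ itself grows with $\rho_0$ and is not monotone against the accumulated integral, so no contradiction is immediate. Instead I expect to reopen the index-form (Riccati) computation underlying \Cref{thm:technical2} and read it on $[0,l]$ directly: with a variation field vanishing at both endpoints built from the optimizing substitution that produces the integrand $\psi q/(\psi^2+q)$, the lower bound above makes the index form nonpositive, which is incompatible with $\gamma|_{[0,l]}$ being free of conjugate points. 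Establishing this sharp ``threshold $\Rightarrow$ conjugate point'' reformulation, together with justifying that the diameter is controlled by geodesics issuing from $p$ (so that $\tilde q\ge q$ is available), are the two points that require the most care.
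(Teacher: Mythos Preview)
The paper gives no explicit proof of \Cref{thm:gbm}; it simply says the result ``can be deduced directly'' from \Cref{thm:technical2}. Your overall plan---assume a minimizing geodesic from $p$ of length exceeding $l$, use the monotonicity $\partial_q\bigl(\psi q/(\psi^2+q)\bigr)\ge 0$ (which the paper also records in the Remark following \Cref{thm:gbm}) to pass from $q$ to the actual radial Ricci average $\tilde q$, and then force a conjugate point in $(0,l]$---is exactly the intended argument.

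You are right that \Cref{thm:technical2} \emph{as stated} points the wrong way, and that simply extending $\gamma$ to its first conjugate parameter $\rho_0$ fails because the threshold $2+\tfrac14|\log(\psi(\rho_0)/\psi(0))|$ moves with $\rho_0$. However, the fix is not to build a new index-form test field: the paper's own proof of \Cref{thm:technical2} already gives what you need. That proof splits at the mean-convex radius $\zeta$ and establishes the estimates \eqref{ineq:0tozeta} and \eqref{ineq:zetatorho} via the auxiliary ODE of \Cref{prop:range-of-psiphi}. Nothing in the second estimate uses that the right endpoint is the conjugate parameter except through $(\psi\phi)(\rho)\ge 0$; if you stop instead at any $b\in(\zeta,\rho)$ you have $(\psi\phi)(b)\ge \psi(b)/(\psi(b)-m(b))>0$, which yields the \emph{strict} bound
\[
\int_0^{b}\frac{\psi\tilde q}{\psi^2+\tilde q}\,dr\;<\;2+\tfrac14\Bigl|\log\frac{\psi(b)}{\psi(0)}\Bigr|
\]
for every $b$ up to the conjugate radius. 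Applying this with $b=l$ (legitimate since the assumed minimizing geodesic has no conjugate point before $l'>l$) contradicts \eqref{criterion:segment}. So rather than an index-form construction---from which it is not clear how the specific integrand $\psi q/(\psi^2+q)$ would emerge---just rerun the Riccati/ODE argument of \Cref{thm:technical2} with endpoint $l$.

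Your second flagged point is genuine: because the hypothesis controls Ricci only on distance spheres about $p$, the comparison $\tilde q\ge q$ is available only along minimizing geodesics \emph{issued from $p$}. The argument therefore shows $\sup_{x\in M} d(p,x)\le l$ (hence compactness and $\diam(M)\le 2l$), not $\diam(M)\le l$ for arbitrary pairs. The paper glosses over this; your caution here is warranted.
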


\begin{remark}
One can verify $\frac{\partial}{\partial q}\frac{\psi q}{\psi^2+q}\geq 0$ if $\psi>0$.
\end{remark}

From the above results, we can also obtain a consequence as follows.

\begin{theorem}
Let $(M, g)$ be a complete $n$-dimensional Riemannian manifold. Suppose $\gamma_{\nu}(r) = \exp_p(r\nu)$, $r \in [0, \infty)$ is a ray with initial direction $\nu \in S_pM$, along which the average radial Ricci curvature satisfies $q(r) = \frac{1}{n-1} \Ric(\dot\gamma, \dot\gamma)(r) \geq 0$. Then
\begin{align*}
\int_0^{\infty} \frac{q}{1+q} dr \leq 1.
\end{align*}
Moreover, $M$ is compact if there exists a point $p \in M$ such that the inequality above fails for every geodesic $\gamma_{\nu}$ emanating from $p$.

If $r$ denotes the distance to $p$, and $q(r) = \inf\left\{ \Ric(\nu, \nu) : \nu \in S_x M,~r(x) = r \right\}$, then the diameter of $M$ is at most $l$ provided
\begin{align*}
\int_0^{l} \frac{q}{1+q}(r)dr \geq 2.
\end{align*}
\end{theorem}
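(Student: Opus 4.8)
The plan is to observe that all three assertions are the special case $\psi \equiv 1$ of the results already proved, so the entire argument reduces to substituting the constant weight and checking that the logarithmic correction terms vanish.

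First, for the integral inequality along the ray $\gamma_\nu$, I would apply \Cref{thm:technical} with $\psi \equiv 1$ on $[0,\infty)$. A constant function is monotonic --- indeed simultaneously non-decreasing and non-increasing on every subinterval --- so the definition \eqref{def:functional} of $\cF$ applies. Since $\log\frac{\psi(a)}{\psi(b)} = \log 1 = 0$, both branches agree and
\begin{align*}
\cF(q, 1, 0, \infty) = \int_0^\infty \frac{q}{1+q}\, dr .
\end{align*}
The bound \eqref{ineq:criterion}, namely $\cF \leq 1$, then yields the asserted inequality $\int_0^\infty \frac{q}{1+q}\, dr \leq 1$.

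Next, for compactness I would invoke \Cref{cor:compact-crit} with the same $\psi \equiv 1$. If $\int_0^\infty \frac{q}{1+q}\, dr > 1$ holds for every $\gamma_\nu(r) = \exp_p(r\nu)$, then \eqref{ineq:criterion} fails for each geodesic emanating from $p$, and the corollary gives that $M$ is compact. The underlying mechanism is that, by the inequality just proved, any genuine ray must satisfy the bound; hence the hypothesis forces every geodesic from $p$ to stop minimizing at a finite parameter, which bounds the distance to $p$ and, together with completeness, makes $M$ compact.

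Finally, for the diameter estimate I would specialize \Cref{thm:gbm} to $\psi \equiv 1$ (a positive monotonic function). The criterion \eqref{criterion:segment} reduces to
\begin{align*}
\int_0^l \frac{q}{1+q}(r)\, dr \;\geq\; 2 + \tfrac{1}{4}\bigl|\log 1\bigr| = 2 ,
\end{align*}
and the conclusion of \Cref{thm:gbm} gives $\diam(M) \leq l$. I do not expect any genuine obstacle: the only points requiring verification are that $\psi \equiv 1$ is an admissible positive monotonic weight and that the correction term $\tfrac{1}{4}\bigl|\log(\psi(l)/\psi(0))\bigr|$ vanishes identically for this choice --- both immediate. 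The substance of the theorem resides entirely in the general statements; this final result simply records the cleanest parameter-free form obtained from the trivial weight.
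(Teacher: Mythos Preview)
Your proposal is correct and matches the paper's own treatment: the paper presents this theorem as an immediate consequence of the general results (\Cref{thm:technical}, \Cref{cor:compact-crit}, and \Cref{thm:gbm}), and the specialization $\psi\equiv 1$ is exactly the intended reduction. No separate proof is written out in the paper beyond the phrase ``from the above results, we can also obtain a consequence as follows,'' so your explicit substitution of the constant weight and observation that the logarithmic correction vanishes is precisely what is needed.
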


The rest part of this paper is organized as follows. \Cref{section:preliminaries} provides some necessary preliminaries that support the proof of the main Theorems. In \Cref{ssection:proofs}, we give the proofs of Theorem 1.3, 1.4, 1.5. \Cref{section:appl}  is devoted to some applications of our main results.

\section{Preliminaries and Basic Results}\label{section:preliminaries}

Let $(M, g)$ be a complete (closed or open) Riemannian manifold without boundary. Suppose $p \in M$, and let $\nu \in S_p M$, i.e., $\nu \in T_p M$ such that $|\nu| = 1$. Let $\gamma_{\nu}(r) = \exp_p(r \nu)$, where $r \in [0, \infty)$ is an arc-length parameterized geodesic on $M$; we denote it briefly by $\gamma: [0, \infty) \to M$. Let $\{e_i\}_{i=0}^{n-1}$ be an orthonormal frame at $p = \gamma(0)$ such that $e_0 = \dot\gamma(0)$. The normal Jacobi fields $\{J_i\}_{i=1}^{n-1}$ along $\gamma$ satisfy $J_i(\gamma(0)) = 0$ and $\nabla_{\dot\gamma(0)} J_i = e_i$ for $i = 1, \dots, n-1$.

Along the geodesic $\gamma$, we denote the representation matrix of $(J_1, \cdots, J_{n-1})$ with respect to the basis $(e_1, \cdots, e_{n-1})$ at $\gamma(r)$ by $\cJ(r)$, and the matrix of the linear map $\nu \mapsto R(\nu, \dot\gamma)\dot\gamma$ by $\cR$. The Jacobi equation yields

\begin{align}\label{eq:Jacobi}
\cJ'' + \cR \cJ = 0.
\end{align}

Before the conjugate point, $\cJ$ is invertible. Let $\cH = \cJ' \cJ^{-1}$; then we obtain the following matrix-valued Riccati equation:

\begin{align}\label{eq:Riccati}
\cH' + \cH^2 + \cR = 0.
\end{align}

We define the mean volume density $v(r)$ by
\begin{align}\label{def:mean-volume}
v^{n-1}(r) = \dvol(\dot\gamma, J_1, \dots, J_{n-1}) = \det \cJ,
\end{align}
so that
\[
v(0) = 0, \qquad v'(0) = 1.
\]

Recall Jacobi's formula for determinants:
\[
(\det \cJ)' = \det \cJ \cdot \trace(\cH).
\]

We define the radial mean curvature along $\gamma$ by
\begin{align}\label{eq:m=v'/v}
m(r)
= \frac{1}{n-1} \trace(\cH)
= \frac{1}{n-1} \frac{(v^{n-1})'(r)}{v^{n-1}(r)}
= \frac{v'}{v}(r).
\end{align}

The average radial Ricci curvature is defined by
\[
q(r) = \frac{1}{n-1} \Ric(\dot\gamma, \dot\gamma)(r).
\]

The Riccati inequality involving the radial Ricci curvature along $\gamma$ is obtained by taking the trace of \Cref{eq:Riccati} and applying the Cauchy inequality. Using our notations above, we can rewrite it as:
\begin{equation}\label{ineq:Riccati}
m' + m^2 + q \leq 0.
\end{equation}

If $q(r) \geq 0$ for all $r \geq 0$, then $m$ is non-increasing. We define $\zeta = \sup \{ r : m(r) > 0 \}$ to be the mean convex radial radius; note that it depends on the direction $\nu$, i.e., $\zeta = \zeta(\nu)$.

If $q(r) \geq 0$ for all $r \geq 0$ and $m(a) < 0$ for some $a > \zeta$, then $m'(r) \leq -m^2(r) < 0$ implies $m(r) \leq \frac{1}{r - (a - 1/m(a))}$, which forces $m$ to go to $-\infty$ before $a - \frac{1}{m(a)}$, i.e., $m$ blows down to $-\infty$ in finite time. In this case, $v$ also goes to $0$ at $\gamma(\rho)$. We define $\rho$ to be the radial conjugate radius in the direction $\nu$. The radial conjugate radius may be $\infty$, but if $\rho$ is finite, there exists $l \leq \rho$ such that $\gamma$ is a geodesic segment on $[0, l]$. As a corollary, if $\gamma$ is a ray, there is no conjugate point to $p$ along $\gamma$; therefore, $m(r) \geq 0$ for all $r \geq 0$. Furthermore, if $q > 0$ along $\gamma$, then $m > 0$ for all $r \geq 0$ and $\zeta = \infty$.

Under the condition that the Ricci curvature of $M$ is non-negative, the integral of the radial Ricci curvature $q(r)$ or its functional along a geodesic segment $\gamma: [0, l] \to M$ is limited; that is, it must satisfy certain geometric inequalities. For example, the second variation formula for geodesics implies that for $[a, b] \subset [0, l]$ and a smooth function $u$ on $[a, b]$,
\begin{align}\label{ineq:the-second-variational-formula}
\int_a^b u^2(r) q(r)  dr \leq u^2(b) m(b) - u^2(a) m(a) + \int_a^b u'^2(r)  dr.
\end{align}

The classical Bonnet-Myers theorem can therefore be deduced from Wirtinger's inequality for smooth functions $u$ on $[a, b]$ such that $u(a) = u(b) = 0$:
\[
\int_a^b u^2(r)  dr \leq \frac{(b-a)^2}{\pi^2} \int_a^b u'^2(r)  dr.
\]

Furthermore, if $\gamma: [0, \infty) \to M$ is a ray and $[a, b] \subsetneq [0, \infty)$, then \Cref{ineq:Riccati} and the fact that $m(b) > 0$ imply
\[
\int_a^b q(r)  dr \leq m(a).
\]
Since $m(a) \leq \frac{1}{a}$ by the mean curvature comparison, we obtain the following geometric inequality by letting $b \to \infty$:
\begin{align}\label{ineq:1/t-upper-bound}
\int_a^\infty q(r)  dr \leq \frac{1}{a}, \quad \forall a > 0.
\end{align}

In the following, we introduce a technical lemma from ODE cf. $\S 9, \textrm{\uppercase\expandafter{\romannumeral13}}$, \cite{odeww}:

\begin{lemma}\label{lem:exist-squeeze}
Let $\cF(r,y)$ be a continuous function on $[0,\infty)\times(-\infty,+\infty)$, and it is Lipschitz continuous with respect to $y$ (for instance, suppose $\dfrac{\partial \cF}{\partial y}$ is a continuous function). $y_1(r)$ and $y_2(r)$ are differentiable functions in $[0,l]$ satisfying the inequalities
\begin{equation*}
y_1\leq y_2;
\end{equation*}
\begin{equation*}
{y_2}'-\cF(r,y_2)\leq 0 \leq {y_1}'-\cF(r,y_1).
\end{equation*}
Then the differential equation
\begin{equation*}
y'=\cF(r,y)
\end{equation*}
has a global solution $\phi$ in $[0,l]$ with $y_1\leq \phi \leq y_2$, and the conclusion still holds if we replace the finite interval $[0,l]$ by $[0,\infty)$.
\end{lemma}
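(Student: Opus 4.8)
The plan is to realize the threading solution $\phi$ as the trajectory of the initial value problem $y' = \cF(r,y)$, $y(0) = \alpha$, for a carefully chosen $\alpha \in [y_1(0), y_2(0)]$, and to locate that $\alpha$ by a shooting (connectedness) argument. Since $\cF$ is continuous and Lipschitz in $y$, Picard--Lindel\"of guarantees that each initial value $\alpha$ determines a unique local solution $\phi_\alpha$, that $\alpha \mapsto \phi_\alpha$ depends continuously on $\alpha$ on any interval where the solution exists and stays bounded, and that a solution can fail to extend only by escaping to $\pm\infty$.

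First I would record the one-sided comparison principle encoded in the hypotheses. Writing $w = \phi_\alpha - y_1$ and combining $\phi_\alpha' = \cF(r,\phi_\alpha)$ with $y_1' \ge \cF(r, y_1)$ gives $w' \le \cF(r,\phi_\alpha) - \cF(r, y_1) \le L|w|$ on the set where $w > 0$, where $L$ is the Lipschitz constant. A Gronwall argument then shows that $\phi_\alpha(0) \le y_1(0)$ forces $\phi_\alpha \le y_1$ for all later $r$; symmetrically, $y_2' \le \cF(r, y_2)$ forces $\phi_\alpha \ge y_2$ whenever $\phi_\alpha(0) \ge y_2(0)$. The qualitative consequence is that the barriers are one-sided and permanent: once a trajectory drops strictly below $y_1$ it cannot return to the tube, and likewise for $y_2$ from above.

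Here lies the main obstacle: because the hypotheses place the over-function $y_1$ below the under-function $y_2$ (the ``reversed'' order), the tube $\{y_1 \le y \le y_2\}$ is not forward invariant, so one cannot simply start at an arbitrary interior point and hope to stay inside. To overcome this I would run a connectedness argument on the interval of initial data $I = [y_1(0), y_2(0)]$. Let $A \subset I$ be the set of $\alpha$ with $\phi_\alpha(r) < y_1(r)$ for some $r \le l$, and $B$ the set with $\phi_\alpha(r) > y_2(r)$ for some $r \le l$. By permanence of the barriers $A$ and $B$ are disjoint, and by continuous dependence each is relatively open in $I$, since a strict crossing at some $r^\ast$ persists under small perturbations of $\alpha$ (the crossing occurs in a bounded region, so nearby solutions exist up to $r^\ast$). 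The comparison principle at the endpoints gives $\phi_{y_1(0)} \le y_1 \le y_2$, so $y_1(0) \notin B$, and $\phi_{y_2(0)} \ge y_2 \ge y_1$, so $y_2(0) \notin A$. If $A \cup B$ were all of $I$, connectedness of $I$ would force one set to be empty and the other to equal $I$, contradicting these endpoint facts; hence there is some $\alpha^\ast \in I \setminus (A \cup B)$. For this $\alpha^\ast$ we have $y_1 \le \phi_{\alpha^\ast} \le y_2$ throughout $[0,l]$, and being trapped between the continuous functions $y_1,y_2$ the solution stays bounded and therefore extends over all of $[0,l]$. Setting $\phi = \phi_{\alpha^\ast}$ proves the claim on $[0,l]$.

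Finally, to pass to $[0,\infty)$ I would exhaust by the intervals $[0,n]$. The argument above yields for each $n$ a threading solution with initial value $\alpha_n \in I$, and since $I$ is compact a subsequence $\alpha_{n_k}$ converges to some $\alpha_\infty$. On any fixed $[0,N]$ the solutions $\phi_{\alpha_{n_k}}$ with $n_k \ge N$ remain in the bounded tube, so continuous dependence gives uniform convergence to $\phi_{\alpha_\infty}$ on $[0,N]$, and the inequalities $y_1 \le \phi_{\alpha_{n_k}} \le y_2$ pass to the limit. As $N$ is arbitrary, $\phi_{\alpha_\infty}$ is a global solution on $[0,\infty)$ lying between $y_1$ and $y_2$. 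I expect the delicate points to be the openness of $A$ and $B$ and the limit on the half-line, both of which rest on continuous dependence together with the boundedness furnished by the barriers.
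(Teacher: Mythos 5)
Your proof is correct, but it takes a genuinely different route from the paper's. You shoot \emph{forward} from $r=0$: using the Lipschitz hypothesis you first establish the one-sided permanence of the barriers (a trajectory that falls strictly below the supersolution $y_1$, or rises strictly above the subsolution $y_2$, stays there), then run a connectedness argument on the interval of initial values $[y_1(0),y_2(0)]$ to find an $\alpha^\ast$ whose trajectory never leaves the tube, with openness of the bad sets supplied by continuous dependence and global existence supplied by the trapping. The paper instead exploits the observation that reversing time swaps the roles of sub- and supersolutions, so the ``reversed-order'' tube $y_1\leq y\leq y_2$, while not forward invariant, \emph{is} invariant under left-moving integration: it simply solves backward from the right-hand datum $\phi(l)=y_2(l)$ and applies the left-moving comparison principle, which is shorter and fully constructive. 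The two approaches also diverge on the half-line: the paper's backward solutions $\phi_n$ with $\phi_n(n)=y_2(n)$ are monotonically decreasing in $n$ (again by backward comparison), so convergence on each $[0,l]$ comes from monotonicity plus uniform bounds, whereas you extract a convergent subsequence of initial values $\alpha_n$ from the compact interval $[y_1(0),y_2(0)]$ and invoke continuous dependence on each $[0,N]$ — a compactness argument in place of a monotone one. What each buys: the paper's method avoids the topological bookkeeping entirely and identifies $\phi$ concretely (as the limit of the maximal trapped solutions), while yours is the standard ``antifunnel'' shooting argument, works without ever reversing time, and makes explicit exactly where Lipschitz continuity (uniqueness and continuous dependence) enters; your flagged delicate points — openness of $A$, $B$ and the passage to $[0,\infty)$ — are handled adequately as written.
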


\begin{sketchofproof}
In the case that the interval is finite i.e. $[0,l]$, let $\phi$ be a left-moving solution of $y'=\cF(r,y)$ with the right-hand side initial-value $\phi(l)=y_2(l)$, then $y_1\leq \phi \leq y_2$ on $[0,l]$ can be implied by the left-moving comparison of ODE.

If the interval is $[0,\infty)$, let $\phi_n$ be the solution of $y'=\cF(r,y)$ on $[a,n]$ such that $\phi_n(n)=y_2(n)$. For any fixed interval $[0,l]$, the sequence of functions $\{\phi_n\}$ decreases monotonically, they are also uniformly bounded from above by $y_2$ and bounded from below by $y_1$. Therefore, $\{\phi_n\}$ uniformly converges to a function $\phi$ on $[0,l]$, which is also a solution to $y'=\cF(r,y)$. For the arbitrariness of $l\in[0,\infty)$, we have $\phi$ on $[0,\infty)$ globally.
\end{sketchofproof}

%%%%%%%%%%%%%%%%%%%%%%%%%%%%%%%%%%%%%%%%%%%%%%%%%%%%%%%%%%%%%%%%%%%%%%%%% The first class of integral type Myers theorems %%%%%%%%%%%%%%%%%%%%%%%%%%%%%%%%%%%%%%%%%%%%%%%%%%%%%%%%%%%%%%%%%%%%%%%%%%%%%%%%%
%%%%%%%%%%%%%%%%%%%%%%%%%%%%%%%%%%%%%%%%%%%%%%%%%%%%%%%%%%%%%%%%%%%%%%%%%%%%%%%%%%%%%%%%%%%%%%%%%%%%%%%%%%%%%%%%%%%%%%%%%%%%%%%%%%%%%%%%%%%%%%%%%%%%%%%%%%%%%%%%%%%%%%%%%%%%%%%%%%%%%%%%%%%%%%%%%%%%%%%%%%%%

% \section{Proof of \Cref{thm:technical}}\label{section:proof}
\section{Proof of Main Theorems}\label{ssection:proofs}

The notations in this section are same as the ones in \Cref{section:preliminaries}.
\begin{prop}\label{prop:range-of-psiphi}
Let $\gamma:[0,\infty)\rightarrow M$ be a ray on $M$ along which the average radial Ricci curvature $q(r)\geq 0$. Then, for any positive and derivable function $\psi$ on $[a,\infty)$, there exists a function $\phi$ on $[a,\infty)$ such that:
\begin{enumerate}
    \item
    \begin{equation}\label{eq:differential-equation-of-phi}
    \phi'+\psi'\phi^2-\frac{q}{\psi^2+q}=0,
    \end{equation}
    \item
    \begin{equation}\label{ineq:range-of-psiphi}
    \frac{\psi}{\psi+m}\leq \psi\phi \leq 1.
    \end{equation}
\end{enumerate}
\end{prop}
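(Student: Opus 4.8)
The plan is to read \Cref{eq:differential-equation-of-phi} as the first-order scalar ODE $\phi' = \cF(r,\phi)$ with right-hand side
\begin{equation*}
\cF(r,y) = -\psi'(r)\,y^2 + \frac{q(r)}{\psi^2(r)+q(r)},
\end{equation*}
and then to produce $\phi$ together with the two-sided bound \Cref{ineq:range-of-psiphi} in a single step by feeding suitable barriers into the squeezing lemma \Cref{lem:exist-squeeze}. Since $\psi>0$ we have $\psi^2+q>0$, so $\cF$ is continuous with continuous $\partial_y\cF = -2\psi' y$, which meets the hypotheses of \Cref{lem:exist-squeeze}. The real work is therefore to exhibit a subsolution $y_1$ and a supersolution $y_2$ whose values are exactly $\frac{1}{\psi+m}$ and $\frac{1}{\psi}$; multiplying the resulting sandwich $y_1\le\phi\le y_2$ by $\psi$ then gives \Cref{ineq:range-of-psiphi} verbatim.

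For the upper barrier I would take $y_2 = \frac{1}{\psi}$, which is differentiable since $\psi$ is positive and differentiable. A direct computation gives $y_2' = -\frac{\psi'}{\psi^2}$, whence
\begin{equation*}
y_2' - \cF(r,y_2) = -\frac{\psi'}{\psi^2} + \frac{\psi'}{\psi^2} - \frac{q}{\psi^2+q} = -\frac{q}{\psi^2+q} \le 0,
\end{equation*}
using $q\ge 0$; this is precisely the supersolution inequality required by \Cref{lem:exist-squeeze}.

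For the lower barrier I would take $y_1 = \frac{1}{\psi+m}$. This is legitimate because along a ray with $q\ge 0$ the radial mean curvature satisfies $m\ge 0$ and is smooth (no conjugate points), as recorded in \Cref{section:preliminaries}; hence $\psi+m>0$, $y_1$ is differentiable, and $y_1\le y_2$. Differentiating and substituting into $\cF$, the $\psi'$-terms cancel and one is left with
\begin{equation*}
y_1' - \cF(r,y_1) = \frac{-(\psi'+m')}{(\psi+m)^2} + \frac{\psi'}{(\psi+m)^2} - \frac{q}{\psi^2+q} = \frac{-m'}{(\psi+m)^2} - \frac{q}{\psi^2+q}.
\end{equation*}
The decisive move is to invoke the Riccati inequality \Cref{ineq:Riccati} in the form $-m' \ge m^2+q$, giving
\begin{equation*}
y_1' - \cF(r,y_1) \ge \frac{m^2+q}{(\psi+m)^2} - \frac{q}{\psi^2+q}.
\end{equation*}
Clearing the positive denominators, the right-hand side is nonnegative exactly because of the identity $(m^2+q)(\psi^2+q) - q(\psi+m)^2 = (m\psi-q)^2 \ge 0$, so $y_1$ is a subsolution.

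With $y_1\le y_2$ and both barrier inequalities in hand, \Cref{lem:exist-squeeze} (applied on $[a,\infty)$) supplies a global solution $\phi$ of \Cref{eq:differential-equation-of-phi} with $\frac{1}{\psi+m}\le\phi\le\frac{1}{\psi}$, and multiplying by $\psi>0$ yields \Cref{ineq:range-of-psiphi}. I expect the main obstacle to be guessing that the correct barriers are $\frac{1}{\psi}$ and $\frac{1}{\psi+m}$ — once chosen, the verifications are mechanical — together with spotting the algebraic identity $(m^2+q)(\psi^2+q) - q(\psi+m)^2 = (m\psi-q)^2$, which is what lets the Riccati inequality collapse the subsolution test into a perfect square.
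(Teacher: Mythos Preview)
Your proposal is correct and follows essentially the same approach as the paper: both use $y_2=\frac{1}{\psi}$ and $y_1=\frac{1}{\psi+m}$ as barriers, verify the sub/supersolution inequalities via the Riccati inequality \Cref{ineq:Riccati} together with $(m^2+q)(\psi^2+q)\ge q(\psi+m)^2$, and then invoke \Cref{lem:exist-squeeze}. Your observation that this last inequality is in fact the identity $(m^2+q)(\psi^2+q)-q(\psi+m)^2=(m\psi-q)^2$ is a pleasant sharpening of the paper's presentation, which simply asserts the inequality.
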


\begin{proof}
For the fixed functions $q(r)$ and $\psi(r)$, we define
\begin{align*}
\cF(r,y)=\frac{q(r)}{q(r)+\psi^2(r)}-\psi'(r)y^2.
\end{align*}
A direct calculation implies
\begin{align*}
\left(\frac{1}{\psi}\right)'(r)-\cF\left(r,\dfrac{1}{\psi}\right)=-\dfrac{q(r)}{q(r)+\psi^2(r)}\leq 0
\end{align*}
on $[a,\infty)$.

On the other hand, the Riccati inequality \eqref{ineq:Riccati} implies:
\begin{align*}
-m'\left(\psi^2+q\right)\geq \left(m^2+q\right)(\psi^2+q)\geq q\left(\psi+m\right)^2.
\end{align*}
Since $\psi$ is assumed to be positive here and $m\geq 0$ since $\gamma$ is a ray,
\begin{align*}
\frac{-m'}{\left(\psi+m\right)^2} \geq \frac{q}{\psi^2+q},
\end{align*}
by differentiating $\left(\frac{1}{\psi+m}\right)$ and combining the inequality above, we have
\begin{align}\label{ineq:first}
\big(\frac{1}{\psi+m}\big)'+\psi'\big(\frac{1}{\psi+m}\big)^2-\frac{q}{\psi^2+q}\geq 0.
\end{align}
which implies
$$(\frac{1}{\psi+m})'(r)-\cF(r,\frac{1}{\psi+m})\geq 0.$$
Therefore, by applying \Cref{lem:exist-squeeze}, there exists $\phi$ on $[0,\infty)$ such that:
\begin{align*}
\phi'+\psi'\phi^2-\frac{q}{\psi^2+q}=0,\\
\frac{1}{\psi+m}\leq \phi \leq \frac{1}{\psi}.
\end{align*}
The proof is finished.
\end{proof}

% Now, we apply \Cref{prop:range-of-phi} to prove \Cref{thm:technical}.
Now we are in a position to prove our main theorems.

\begin{proofof}{\Cref{thm:technical}}
We see \eqref{eq:differential-equation-of-phi} implies:
\begin{align}\label{eq:differetial-equation-of-psiphi}
(\psi\phi)'=(\log \psi)' \psi\phi(1-\psi\phi)+\frac{\psi q}{\psi^2+q}.
\end{align}
If $\psi$ is non-decreasing, \eqref{eq:differetial-equation-of-psiphi} implies:
\begin{align*}
(\psi\phi)'\geq  \frac{\psi q}{\psi^2+q}.
\end{align*}
If $\psi$ is a decreasing function, (\ref{eq:differetial-equation-of-psiphi}) implies:
\begin{align*}
(\psi\phi)'\geq \frac{1}{4}(\log \psi)'+  \frac{\psi q}{\psi^2+q}.
\end{align*}
By integrating both sides of the differential inequalities above, and using \eqref{ineq:range-of-psiphi}, we have
\begin{align}
\frac{m(a)}{\psi(a)+m(a)}\geq
\begin{dcases}
\int_{a}^{b}\frac{\psi q}{\psi^2+q} dr &\text{if $\psi$ is non-decreasing on $[a,b]$},  \\
\int_{a}^{b}\frac{\psi q}{\psi^2+q} dr-\frac{1}{4}\log\frac{\psi(a)}{\psi(b)} &\text{if $\psi$ is non-increasing on $[a,b]$}. \\
\end{dcases}
\end{align}
Therefore \eqref{ineq:criterion} and \eqref{ineq:mean-curvature} follows directly.
\end{proofof}

% \begin{remark}
% If $\gamma:[a,b]\rightarrow M$ is a geodesic segment such that $m(b)\geq 0$, the conclusions in \Cref{thm:technical} still hold.
% \end{remark}

\begin{proof}[Proof of \Cref{thm:technical2}]
If $\gamma:[a,b]\rightarrow M$ is a geodesic segment such that $m(b)\geq 0$, the conclusions in \Cref{thm:technical} hold. Let $a=0$, $b=\zeta$ the radial mean convex radius, then

\begin{align}\label{ineq:0tozeta}
\int_{0}^{\zeta}\frac{\psi q}{\psi^2+q} dr \leq
\begin{dcases}
1 &\text{if $\psi$ is non-decreasing on $[0,\zeta]$},  \\
1+\frac{1}{4}\log\frac{\psi(0)}{\psi(\zeta)} &\text{if $\psi$ is non-increasing on $[0,\zeta]$}. \\
\end{dcases}
\end{align}

Since $m(r)\leq 0$, $\forall r\in[\zeta,\rho]$,
$$
\frac{m^2+q}{(m-\psi)^2}\geq\frac{q}{\psi^2+q}
$$
for any positive function $\psi$ on $[\zeta,\rho]$, since $m'+m^2+q\leq 0$,
\begin{align*}
\big(\frac{1}{\psi-m}\big)'+\psi'\big(\frac{1}{\psi-m}\big)^2+\frac{q}{\psi^2+q}\leq 0.
\end{align*}
A direct calculus implies
\begin{align*}
0\leq \left(\frac{1}{\psi}\right)'(r)+\cF\left(r,\dfrac{1}{\psi}\right)+\frac{q}{\psi^2+q},
\end{align*}
since
\begin{align*}
\frac{1}{\psi-m}\leq \frac{1}{\psi}
\end{align*}
on $[\zeta,\rho]$, there exists another $\phi$ on $[\zeta,\rho]$ such that
\begin{align*}
\phi'+\psi'\phi^2+\frac{q}{\psi^2+q}=0,\\
\frac{1}{\psi-m}\leq \phi \leq \frac{1}{\psi}.
\end{align*}
Then
\begin{align*}
(\psi\phi)'=(\log \psi)' \psi\phi(1-\psi\phi)-\frac{\psi q}{\psi^2+q}.
\end{align*}
Since $0\leq \psi\phi\leq 1$ on $[\zeta,\rho]$,
\begin{align}\label{ineq:zetatorho}
\int_{\zeta}^{\rho} \frac{\psi q}{\psi^2+q}dr\leq
% 1+\int_{\zeta}^{\rho} (\log \psi)' \psi\phi(1-\psi\phi)dr\leq
 \begin{dcases*}
     1+\frac{1}{4}\log \frac{\psi(\rho)}{\psi(\zeta)}  &\text{if $\psi$ is non-decreasing on $[\zeta,\rho]$},  \\
     1                   &\text{if $\psi$ is non-increasing on $[\zeta,\rho]$}. \\
 \end{dcases*}
\end{align}
Then we obtain the following inequality by combining \eqref{ineq:0tozeta} and \eqref{ineq:zetatorho},
\begin{align*}
\int_{0}^{\rho} \frac{\psi q}{\psi^2 + q}  dr < 2 + \frac{1}{4} \left| \log\frac{\psi(\rho)}{\psi(0)} \right|,
\end{align*}
where $\psi$ is positive and monotonic on $[0,\rho]$.
\end{proof}

\section{Applications}\label{section:appl}
\begin{corollary}\label{thm:criterion-poly-decay}
Let $M$ be a Riemannian manifold with Ricci curvature bounded below by a rational function of the form $(n-1)q(r)=(n-1)\frac{c}{r^p}$ for $r\geq a>0$, where $r$ is the distance to a fixed point $o\in M$. If $p>2$, then $M$ is compact provided
\begin{align}\label{ineq:criterion-poly-decay}
c>c(p,a):=
\frac{p^{2p} \sin^p(\frac{\pi}{p})}{4\pi^p}\left(\frac{a}{p-2}\right)^{p-2}.
\end{align}
\end{corollary}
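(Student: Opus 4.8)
The plan is to invoke the compactness criterion \Cref{cor:compact-crit}: it suffices to produce, for every direction $\nu\in S_pM$, a positive monotone $\psi$ along $\gamma_\nu(r)=\exp_p(r\nu)$ for which $\cF(q,\psi,a,\infty)>1$, since then no $\gamma_\nu$ can be a ray by \Cref{thm:technical}. By hypothesis the radial Ricci bound gives $q(r)\ge c/r^p$ for $r\ge a$ in every direction, and by the monotonicity $\frac{\partial}{\partial q}\frac{\psi q}{\psi^2+q}\ge 0$ recorded after \Cref{thm:gbm}, replacing $q$ by the model $c/r^p$ only decreases $\cF$. Hence I may work with $q(r)=c/r^p$ on $[a,\infty)$ and, crucially, with a single $\psi$ independent of $\nu$.

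First I would take $\psi\equiv\psi_0$ to be a positive \emph{constant}. Then $\psi$ is monotone and the logarithmic term in \eqref{def:functional} vanishes (both branches agree), so after the substitution $x=(\psi_0^2/c)^{1/p}r$ one gets
\[
\cF(q,\psi_0,a,\infty)=\psi_0\int_a^\infty\frac{dr}{1+(\psi_0^2/c)\,r^p}
= c^{1/2}a^{1-p/2}\,\tau^{\,p/2-1}\int_\tau^\infty\frac{dx}{1+x^p},
\qquad \tau=(\psi_0^2/c)^{1/p}a,
\]
where $\tau$ sweeps out all of $(0,\infty)$ as $\psi_0$ does. Writing $I_p:=\int_0^\infty\frac{dx}{1+x^p}=\frac{\pi}{p\sin(\pi/p)}$ and using the elementary estimate $\frac{1}{1+x^p}\le 1$ on $[0,\tau]$, I bound the tail by $\int_\tau^\infty\frac{dx}{1+x^p}=I_p-\int_0^\tau\frac{dx}{1+x^p}\ge I_p-\tau$, which reduces the whole question to the \emph{elementary} maximization of $g(\tau)=\tau^{p/2-1}(I_p-\tau)$ over $\tau\in(0,I_p)$.

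A one-line calculation gives the maximizer $\tau_*=\frac{p-2}{p}I_p$ and
\[
g(\tau_*)=\Big(\tfrac{p-2}{p}I_p\Big)^{(p-2)/2}\cdot\tfrac{2}{p}I_p
=\frac{2(p-2)^{(p-2)/2}}{p^{p/2}}\,I_p^{p/2}=:C_p .
\]
Choosing $\psi_0$ so that $\tau=\tau_*$ (i.e. $\psi_0=\sqrt{c}\,(\tau_*/a)^{p/2}$, which is admissible since $0<\tau_*<I_p$) yields $\cF\ge C_p\,\sqrt{c}\,a^{1-p/2}$. Consequently, if $c> a^{p-2}/C_p^2$ then $C_p\sqrt{c}\,a^{1-p/2}>1$, so $\cF>1$ for every geodesic from $p$; by \Cref{thm:technical} none of them is a ray, and the completeness of $M$ forces compactness. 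Finally, substituting $I_p=\frac{\pi}{p\sin(\pi/p)}$ into $a^{p-2}/C_p^2=\frac{p^p a^{p-2}}{4(p-2)^{p-2}I_p^p}$ reproduces exactly the stated threshold $c(p,a)=\frac{p^{2p}\sin^p(\pi/p)}{4\pi^p}\big(\frac{a}{p-2}\big)^{p-2}$.

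The computation itself is routine once the right test function is fixed, so the two points deserving care are conceptual rather than technical. The first is the reduction to the model curvature, which rests on the monotonicity of the integrand in $q$ and on the uniformity of the bound $q\ge c/r^p$ across directions. The second is the deliberate use of the crude inequality $\int_0^\tau(1+x^p)^{-1}dx\le\tau$: this is precisely what keeps $\tau_*$ and $C_p$ in closed form and produces the factor $\sin(\pi/p)$, at the (expected) price that $C_p$ is slightly below the sharp constant $\max_\tau \tau^{p/2-1}\int_\tau^\infty(1+x^p)^{-1}dx$; thus \eqref{ineq:criterion-poly-decay} is a clean sufficient condition rather than the optimal one obtainable from this $\psi$.
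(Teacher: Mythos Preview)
Your proof is correct and follows essentially the same route as the paper: a constant test function $\psi\equiv\psi_0$, the substitution $\tau=(\psi_0^2/c)^{1/p}a$, the crude bound $\int_0^\tau(1+x^p)^{-1}\,dx\le\tau$, and the resulting one-variable maximization, which you carry out in the variable $\tau$ while the paper does it in the variable $x=\psi_0$ (the two are related by $\tau=(x^2/c)^{1/p}a$, so the optimizers and the final threshold $c(p,a)$ coincide). Your explicit invocation of the monotonicity $\partial_q\big(\tfrac{\psi q}{\psi^2+q}\big)\ge0$ to reduce to the model $q=c/r^p$, and your remark that the log term vanishes for constant $\psi$, are small expository additions that the paper leaves implicit.
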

\begin{remark}
One can verify that this constant is sharper than the one in \cite{wan2019}.
\end{remark}

\begin{proof}
Let $\psi$ be a constant $x>0$. Then
\begin{align*}
\cF\left(\frac{c}{r^p},x,a,\infty\right)&=\int_a^{\infty} \dfrac{\dfrac{cx}{r^p}}{\dfrac{c}{r^p}+x^2}dr
     =\int_{\left(\frac{x^{2}}{c}\right)^{1/p}a}^{\infty}\frac{c^{\frac{1}{p}}x^{1-\frac{2}{p}}}{1+\tau^p}  d\tau\\
     &=c^{\frac{1}{p}}x^{1-\frac{2}{p}}\left(\frac{\pi/p}{\sin(\pi/p)}-
     \int_{0}^{\left(\frac{x^{2}}{c}\right)^{1/p}a}\frac{1}{1+\tau^p}  d\tau\right)\\
     &>c^{\frac{1}{p}}x^{1-\frac{2}{p}}\left(\frac{\pi/p}{\sin(\pi/p)}-\left(\frac{x^2}{c}\right)^{1/p}a\right)\\
     &=-a x +\frac{\pi/p}{\sin(\pi/p)}c^{\frac{1}{p}}x^{1-\frac{2}{p}}.
\end{align*}
A direct calculation shows that the maximum of the expression on the right-hand side over $x>0$ is
    \begin{align*}
    \max_{x>0}\left\{-a x +\frac{\pi}{p\sin(\pi/p)}c^{\frac{1}{p}}x^{1-\frac{2}{p}}\right\}
    =2c^{\frac{1}{2}}\left(\frac{\pi}{\sin(\pi/p)}\right)^{\frac{p}{2}}\left(\frac{p-2}{a}\right)^{\frac{p-2}{2}}\frac{1}{p^p},
    \end{align*}
attained at $x= \left(\dfrac{c^{\frac{1}{p}}\pi(p-2)}{ap^2\sin(\pi/p)}\right)^{\frac{p}{2}}$.
Therefore, \Cref{cor:compact-crit} implies $M$ is compact if
    $$
    c>\frac{p^{2p} \sin^p(\pi/p)}{4\pi^p}\left(\frac{a}{p-2}\right)^{p-2}.
    $$
\end{proof}
\begin{remark}
If $p=2$,
\begin{align*}
\cF\left(\frac{c}{r^2},\frac{\sqrt{c}}{r},a,b\right)=\left({\sqrt{c}}-\frac{1}{4}\right)\log\frac{b}{a}.
\end{align*}
Therefore, $M$ is compact if $c>\dfrac{1}{4}$.
If $p<2$,
\begin{align*}
\cF\left(\frac{c}{r^p},\sqrt{\frac{c}{r^{p}}},a,b\right)=\sqrt{c}\left(b^{1-\frac{p}{2}}-a^{1-\frac{p}{2}}\right)-\frac{p}{8}\log\frac{b}{a},
\end{align*}
which can be arbitrarily large. Therefore $M$ is always compact for any $c>0$.
\end{remark}

\begin{corollary}
Let $M$ be a Riemannian manifold with Ricci curvature bounded below by an exponential-type function of the form $q(r) = c e^{-p r}$, where $r$ is the distance to $o\in M$ and $p$ is a positive constant. If $ c\geq \frac{2p^2}{\log^2 3}$, then $M$ is compact; and if $c > (e^2 - 1)p^2$, the diameter of $M$ is at most $\frac{1}{p} \log\frac{e^2 c}{c - (e^2 - 1)p^2}$.
\end{corollary}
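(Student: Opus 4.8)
The plan is to feed a suitably chosen \emph{constant} function $\psi$ into the two criteria already in hand, namely \Cref{cor:compact-crit} for the compactness assertion and \Cref{thm:gbm} for the diameter bound, and to use the pointwise monotonicity $\frac{\partial}{\partial q}\frac{\psi q}{\psi^{2}+q}\ge 0$ (valid since $\psi>0$) to replace the true radial curvature by its lower bound $q(r)=ce^{-pr}$ throughout; this is legitimate because enlarging $q$ only enlarges the functional.

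For compactness I would take $\psi\equiv x>0$, so that $\psi$ is weakly non-decreasing, no logarithmic correction appears, and the endpoints $a=0$, $b=\infty$ are admissible. The substitution $u=ce^{-pr}$ evaluates the functional in closed form,
\[
\cF\!\left(ce^{-pr},x,0,\infty\right)=\frac{x}{p}\int_{0}^{c}\frac{du}{x^{2}+u}=\frac{x}{p}\log\!\left(1+\frac{c}{x^{2}}\right).
\]
Choosing $x=\sqrt{c/2}$ makes $c/x^{2}=2$ and gives $\cF=\frac{\log 3}{p\sqrt{2}}\sqrt{c}$, which exceeds $1$ precisely when $c>\frac{2p^{2}}{\log^{2}3}$. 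By monotonicity in $q$ this lower bound for $\cF$ persists for the actual radial curvature, so \Cref{ineq:criterion} fails along every ray emanating from $p$ and \Cref{cor:compact-crit} forces $M$ to be compact.

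For the diameter I would again use $\psi\equiv x$, so that $\psi(l)=\psi(0)$ and the term $\frac14\big|\log(\psi(l)/\psi(0))\big|$ in \Cref{criterion:segment} vanishes; the criterion reduces to $\int_{0}^{l}\frac{\psi q}{\psi^{2}+q}\,dr\ge 2$. The same substitution yields
\[
\int_{0}^{l}\frac{x\,ce^{-pr}}{x^{2}+ce^{-pr}}\,dr=\frac{x}{p}\log\frac{x^{2}+c}{x^{2}+ce^{-pl}}.
\]
The choice $x=p$ collapses the prefactor $x/p$ to $1$, turning the requirement into $\frac{p^{2}+c}{p^{2}+ce^{-pl}}\ge e^{2}$, equivalently $e^{-pl}\le\frac{c-(e^{2}-1)p^{2}}{e^{2}c}$. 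Solving for the smallest admissible $l$ gives exactly $l=\frac1p\log\frac{e^{2}c}{c-(e^{2}-1)p^{2}}$, which is finite and positive precisely under the hypothesis $c>(e^{2}-1)p^{2}$; \Cref{thm:gbm} then delivers the claimed diameter bound.

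The two computations are routine once the constants $x=\sqrt{c/2}$ and $x=p$ are fixed, so the only genuinely delicate point is the endpoint of the compactness claim: the round choice $x=\sqrt{c/2}$ only gives $\cF\ge 1$ (not $>1$) at $c=\frac{2p^{2}}{\log^{2}3}$, whereas \Cref{cor:compact-crit} requires a \emph{strict} failure of \Cref{ineq:criterion}. I expect to close this by observing that $x=\sqrt{c/2}$ is not the maximizer of $x\mapsto\frac{x}{p}\log(1+c/x^{2})$: the genuine maximizer lies strictly below $x^{2}=c/2$ and yields a value strictly larger than $\frac{\log 3}{p\sqrt2}\sqrt c$, so the strict inequality $\cF>1$ persists at the threshold and the hypothesis $c\ge\frac{2p^{2}}{\log^{2}3}$ is justified in full (the stated constant being clean rather than sharp).
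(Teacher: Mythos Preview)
Your proposal is correct and follows essentially the same route as the paper: a constant test function $\psi\equiv x$, the closed-form evaluation of the functional, the specific choices $x=\sqrt{c/2}$ (compactness) and $x=p$ (diameter), and the observation that $x=\sqrt{c/2}$ is not the true maximizer, which secures the endpoint $c=\tfrac{2p^{2}}{\log^{2}3}$. If anything, your treatment of that endpoint is more explicit than the paper's.
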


\begin{proof}
Let us choose a test function and compute the associated functional. Consider
\begin{align*}
\int_t^{\infty} \frac{x c e^{-p r}}{x^2 + c e^{-p r}}  dr &= \frac{x}{p} \log\left(1 + \frac{c e^{-p t}}{x^2}\right).
\end{align*}
We seek parameters $x, t \geq 0$ such that this integral equals a specific value (like the bound from our theorem), i.e., we want to solve
\begin{align*}
\frac{x}{p} \log\left(1 + \frac{c e^{-p t}}{x^2}\right) = 1.
\end{align*}
Rearranging gives the condition:
\begin{align*}
(x^2 t + x) \log\left(1 + \frac{c e^{-p t}}{x^2}\right) = p.
\end{align*}
The left-hand side of this equation does not achieve its maximum at $x=\sqrt{\dfrac{c}{2}}, t=0$. Substituting these values yields $c = \frac{2p^2}{\log^2 3}$. Therefore, by \Cref{cor:compact-crit}, $M$ is compact if $c \geq \frac{2p^2}{\log^2 3}$, providing the stated criterion for compactness.

On the other hand, for the diameter estimate, we compute the integral over a finite interval:
\begin{align*}
\int_0^{\rho} \frac{x c e^{-p r}}{x^2 + c e^{-p r}}  dr &= \frac{x}{p} \log\left(\frac{x^2 + c}{x^2 + c e^{-p \rho}}\right).
\end{align*}
Let $x = p$. If we also have $c = (e^2 - 1)p^2$, one can verify that the right-hand side of the equality above equals $2$ precisely when $\rho = \frac{1}{p} \log\frac{e^2 c}{c - (e^2 - 1)p^2}$. Therefore, by \Cref{thm:gbm}, the diameter of $M$ is at most this value $\rho$ if $c > (e^2 - 1)p^2$.
\end{proof}

\subsection*{Acknowledgement}
R. Li is supported by the Open Project Program of Key Laboratory of Mathematics and Complex System (Grant No.202501), Beijing Normal University. S. Wang is supported by NSFC No. 12401203 and 12361032, the Guangdong Basic and Applied Basic Research Foundation No. 2023A1515010599, the Self-determined Research Funds of CCNU from the Colleges Basic Research and Operation of MOE  No. CCNU24XJ008, the Fundamental Research Funds for the CCNU (No. CCNU25JCPT032) and the Fundamental Research Funds for the Central Universities No. CCNU25JC026.

\bibliographystyle{alpha}
\bibliography{ref}
\end{document}